\documentclass[11pt,reqno]{amsart}
\usepackage{amsmath,amssymb,mathtools}
\usepackage{booktabs,array}
\usepackage[hidelinks]{hyperref}
\usepackage[margin=1.15in]{geometry}
\allowdisplaybreaks

\newtheorem{theorem}{Theorem}[section]
\newtheorem{proposition}[theorem]{Proposition}
\newtheorem{lemma}[theorem]{Lemma}

\theoremstyle{remark}
\newtheorem{remark}[theorem]{Remark}

\newcommand{\Zp}{\mathbb{Z}_p}

\newcommand{\Gamp}{\Gamma_p}

\newcommand{\om}{\omega}

\title[Deformation and Symmetry]
{Deformation and Symmetry in Two Hypergeometric Supercongruences}
\author{Dianbin Bao}
\date{October 2026}

\begin{document}

\begin{abstract}
	We prove two supercongruence conjectures of Li--Long--Tu concerning
	truncated very-well-poised ${}_7F_6(1)$ series and Fourier coefficients of
	modular forms.  The proofs are based on a common deformation method.  By
	deforming the original truncated hypergeometric series to suitable terminating
	series, we are able to apply classical hypergeometric transformations together
	with Mishev's $W(D_5)$ symmetry.  This reduces both problems to congruences
	for lower-rank hypergeometric sums that can be treated using the $p$-adic
	methods of Long--Tu--Yui--Zudilin.  Although the two cases require different
	local arithmetic calculations, their global proof structure is the same.
	The resulting argument gives a unified explanation of these two
	supercongruences and suggests that the same deformation-and-symmetry method
	may apply more broadly.
\end{abstract}
\maketitle

\section{Introduction}

Arithmetic aspects of hypergeometric functions arise in several closely related
settings.  Truncated hypergeometric series play an important role in $p$-adic
questions and supercongruences, while finite-field hypergeometric functions and
hypergeometric motives connect hypergeometric data with point counts, Galois
representations, and modular forms.  In particular, Katz's theory of
hypergeometric sheaves gives $\ell$-adic Galois representations attached to
hypergeometric data \cite{MR1081536}.  Relations between hypergeometric
functions and modular forms appear in both archimedean and $p$-adic settings:
untruncated hypergeometric values can be related to periods or special
$L$-values of modular forms, while truncated hypergeometric series often
satisfy supercongruences involving $p$-adic quantities or Fourier coefficients
of modular forms; see, for example,
\cite{MR3451938,MR3848417,MR4330088} and the references therein.

Deformation methods for truncated hypergeometric series have proved
effective in earlier work on supercongruences; see, for example,
Long--Ramakrishna \cite{MR3451938} and
Deines--Fuselier--Long--Swisher--Tu \cite{MR3596579}.  In this circle of work,
$p$-adic deformations are combined with classical hypergeometric identities
and $p$-adic gamma techniques.  Our argument follows this general
philosophy, with Mishev's $W(D_5)$ symmetry providing the additional
structure that connects the terminating ${}_7F_6(1)$ deformation to the
rank-four hypergeometric sums.

Li, Long, and Tu \cite{MR4445932} revisited Whipple's classical
${}_7F_6(1)$ transformation from the viewpoint of hypergeometric data and,
among other results, recorded a number of numerical supercongruence
conjectures.  Two of these conjectures concern truncated very-well-poised
${}_7F_6(1)$ series and congruences modulo $p^3$ with Fourier coefficients of
the modular forms $f_{12.4.a.a}$ and $f_{24.4.a.a}$. The purpose of the present paper is to prove these two conjectures.

A crucial lower-rank input is provided by the work of
Long--Tu--Yui--Zudilin \cite{MR4330088} on rigid hypergeometric
Calabi--Yau threefolds.  Their results identify the corresponding rank-four
truncated ${}_4F_3(1)$ series modulo $p^3$ with Fourier coefficients of
modular forms.  Thus, for the two cases considered here, the remaining problem
is to construct a congruence-level bridge from the Li--Long--Tu
${}_7F_6(1)$ truncations to these rank-four sums.
\section{Main Results and Hypergeometric Reduction}\label{sec:main-results}

For $a$ in a field of characteristic zero and $k\geq 0$, write
\[
(a)_0:=1,\qquad
(a)_k:=a(a+1)\cdots(a+k-1)
\]
for the Pochhammer symbol.  For complex parameters we use the standard notation
\[
{}_rF_s\!\left[
\begin{matrix}
	a_1,\dots,a_r\\
	b_1,\dots,b_s
\end{matrix};z
\right]
:=
\sum_{k=0}^{\infty}
\frac{(a_1)_k\cdots(a_r)_k}
{(b_1)_k\cdots(b_s)_k}
\frac{z^k}{k!},
\]
whenever the series is defined.  Its truncation at $m$ is denoted by
\[
{}_rF_s\!\left[
\begin{matrix}
	a_1,\dots,a_r\\
	b_1,\dots,b_s
\end{matrix};z
\right]_m
:=
\sum_{k=0}^{m}
\frac{(a_1)_k\cdots(a_r)_k}
{(b_1)_k\cdots(b_s)_k}
\frac{z^k}{k!}.
\]

Let $p\geq3$ be prime.  For $\rho\in\{\frac13,\frac16\}$ define
\[
H_{p,\rho}
:=
{}_7F_6\!\left[
\begin{matrix}
	\frac12,\frac54,\frac12,\frac12,\frac12,\rho,1-\rho\\
	\frac14,1,1,1,\frac32-\rho,\frac12+\rho
\end{matrix};1
\right]_{p-1},
\]
and
\[
S_{p,\rho}
:=
{}_4F_3\!\left[
\begin{matrix}
	\frac12,\frac12,\rho,1-\rho\\
	1,1,1
\end{matrix};1
\right]_{p-1}.
\]
If $f=\sum_{n\geq1}a_n(f)q^n$ is a normalized modular form, we write
$a_p(f)$ for its $p$th Fourier coefficient.  We follow Li--Long--Tu in
using LMFDB labels for the modular forms below.

The following theorem proves two numerical supercongruence conjectures of
Li--Long--Tu \cite{MR4445932}.

\begin{theorem}\label{thm:main}
	For every prime $p\geq3$,
	\[
	H_{p,1/3}\equiv a_p(f_{12.4.a.a})\pmod{p^3},
	\]
	and
	\[
	H_{p,1/6}\equiv a_p(f_{24.4.a.a})\pmod{p^3}.
	\]
\end{theorem}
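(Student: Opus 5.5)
The proof will be a reduction to the rank-four sums $S_{p,\rho}$. By Long--Tu--Yui--Zudilin \cite{MR4330088} we already have $S_{p,1/3}\equiv a_p(f_{12.4.a.a})$ and $S_{p,1/6}\equiv a_p(f_{24.4.a.a})\pmod{p^3}$. It therefore suffices to prove
\[
H_{p,\rho}\equiv S_{p,\rho}\pmod{p^3},\qquad \rho\in\{\tfrac13,\tfrac16\},
\]
possibly up to an explicit factor that is $\equiv1 \pmod{p^3}$. I would do this by $p$-adic deformation.

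First, replace one of the parameters $\frac12$ in $H_{p,\rho}$ by $\frac{1-p}{2}$, or by $\frac12-\frac{p}{2}$ in the appropriate residue form, so that the series terminates at $k=(p-1)/2$. Do the same for the pair $\rho,1-\rho$, replacing it by something like $\rho'=\rho-\frac{p\,\epsilon}{3}$ or $\rho'=\rho-\frac{p\,\epsilon}{6}$, where the shift is chosen according to $p\bmod 3$ or $p\bmod 6$. To keep the series very well poised, the denominators $\frac32-\rho$ and $\frac12+\rho$ must be adjusted at the same time. I will then compare term by term, writing each Pochhammer ratio $(a+xp)_k/(a)_k$ as $1+xp\,H_1+x^2p^2 H_2+\cdots$ in harmonic-type sums. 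The deformation should be designed with two or three free deformed parameters, so that the first- and second-order corrections cancel by symmetry. The model for this is the Long--Ramakrishna trick, where the $\pm x$ deformations of $\frac12$ pair up. The outcome of this step is that $H_{p,\rho}$ agrees modulo $p^3$ with a terminating very-well-poised ${}_7F_6(1)$ that has genuinely $p$-adically deformed parameters. Terms with $k>(p-1)/2$ need separate care, because there the denominators $(1)_k^3$ give no $p$ while the numerators contribute $p^{\ge 3}$ or more.

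Second, apply Whipple's transformation to this terminating ${}_7F_6(1)$. It becomes a gamma-quotient times a terminating balanced ${}_4F_3(1)$. Mishev's $W(D_5)$ symmetry then lets me choose, among the equivalent balanced ${}_4F_3$'s, one whose parameters are $\frac12,\frac12,\rho,1-\rho$ up to $p$-adic shifts and whose lower parameters are $1,1,1$ up to shifts. The gamma prefactor is rewritten with Morita's $\Gamp$. Using reflection $\Gamp(x)\Gamp(1-x)=\pm1$ and the Taylor expansion $\Gamp(a+xp)=\Gamp(a)\bigl(1+G_1(a)xp+\tfrac12 G_2(a)x^2p^2\bigr)+O(p^3)$, the prefactor should reduce to $\pm1$ plus terms that cancel by the same symmetry argument. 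This requires checking the sign carefully in each class of $p$ modulo $12$ (resp. $24$).

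Third, undo the deformation on the ${}_4F_3$ side. I will show that the deformed balanced terminating ${}_4F_3$ is congruent modulo $p^3$ to $S_{p,\rho}$, again by expanding in harmonic sums, in the style of the Long--Tu--Yui--Zudilin arguments. Combined with their theorem, this finishes the proof.

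The main obstacle is the local arithmetic in the first and third steps. The deformation must cancel both the $p$ and the $p^2$ error terms, and also handle the tail $\frac{p-1}{2}<k\le p-1$ together with the terms where $\rho'$-type parameters cross a multiple of $p$. These calculations differ for $\rho=\frac13$ and $\rho=\frac16$. My first attempt would be to use a symmetric two-parameter deformation $(\frac12+xp,\frac12-xp)$, averaged over $x$ and $-x$. Then only even-order terms survive, and the $p^2$ term can be killed using the constraint that the series terminates. If this averaging is not available, I would fall back on Dwork-type congruences for the harmonic sums.
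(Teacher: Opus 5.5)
Your reduction misquotes the rank-four input, and the error propagates. Long--Tu--Yui--Zudilin attach to $\{\frac12,\frac12,\frac13,\frac23\}$ and $\{\frac12,\frac12,\frac16,\frac56\}$ the forms $f_{36.4.a.a}$ and $f_{72.4.a.b}$. These are the twists of $f_{12.4.a.a}$ and $f_{24.4.a.a}$ by $\left(\frac{-3}{\cdot}\right)$. The bridge you need is therefore $H_{p,\rho}\equiv\left(\frac{-3}{p}\right)S_{p,\rho}\pmod{p^3}$. The congruence $H_{p,\rho}\equiv S_{p,\rho}$ you set out to prove is false for every $p\equiv2\pmod3$ with $a_p\neq0$.

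Your split into ``prefactor $\equiv\pm1$'' and ``deformed ${}_4F_3\equiv S_{p,\rho}$'' also cannot work as stated. In the paper the transformations give $H_\rho(-p/2)=R_\rho K J_\rho$ with $K\equiv c_N^2$ (Morley). Up to sign, $R_\rho c_N^2$ is the normalizing factor $\prod_j\Gamma_p(r_j-\frac p2)/\Gamma_p(1-\frac p2)^4$. Its first-order term is a Fermat-quotient combination that is nonzero modulo $p$ in general. So the shifted sum $U_\rho$ is not congruent to $\pm S_{p,\rho}$ by itself. What Long--Tu--Yui--Zudilin's Lemma~5 gives is invariance of the $\Gamma_p$-normalized family $\mathcal G_\rho(s)$ between $s=0$ and $s=-\frac12$. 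The reflection-formula signs at these two endpoints are exactly where $\left(\frac{-3}{p}\right)$ comes from.

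The more serious gap is the deformation itself, which is the heart of the proof. Take your symmetric pair $\frac12\pm t$ over $1\mp t$. The ratio of the $k$th deformed term to the original is $1-t^2\sum_{j<k}\bigl((j+\frac12)^{-2}-(j+1)^{-2}\bigr)+O(t^4)$. At $t=-p/2$ each term therefore carries an error of exact order $p^2$, and of order $p$ on the block where $v_p(b_{1/3,k})=-1$. Averaging over $\pm x$ does nothing, because the ratio is already even in $t$. Termination supplies no cancellation either.

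Suppose you deform the three repeated $\frac12$'s, keeping very-well-poisedness, and want orders one and two to vanish termwise. Then the three shifts must have vanishing first two elementary symmetric functions. That forces the paper's cubic deformation: $\frac12+t,\frac12+\omega t,\frac12+\omega^2t$ over $1-t,1-\omega t,1-\omega^2t$. There $(y+t)(y+\omega t)(y+\omega^2t)=y^3+t^3$ makes the error $O(p^3)$. The one exception is the $\rho=\frac13$ exceptional block, where the residual order-$p^2$ errors cancel in pairs under $k\mapsto N-k$.

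The same identity is used a second time, after Mishev's reflection, Whipple and Sears. It collapses $k!\,(1+(\omega-1)x)_k(1+(\omega^2-1)x)_k$ to $(1-x)_k^3$ modulo $p^3$. This turns the transformed ${}_4F_3$ into the common shift $U_\rho$, with every parameter (including the $k!$) moved by $-\frac p2$, to which the normalized invariance applies. Your plan produces neither this common shift nor a mechanism that removes the $p^2$ errors. Deforming $\rho$ is also unnecessary: the shifts $\rho-x$ and $1-\rho-x$ come out of the Sears step automatically.
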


These two congruences were stated by Li--Long--Tu as part of a larger
collection of numerical supercongruence conjectures, for which they made
the uniform choice $p>11$ \cite{MR4445932}.  The cases $p=3$ and $p=5$
are verified directly from the defining finite sums and the corresponding
Fourier coefficients.  Henceforth we assume $p\geq7$.

The corresponding rank-four input is supplied by Long--Tu--Yui--Zudilin
\cite{MR4330088}.  For these two rigid hypergeometric data, one has
\[
S_{p,1/3}\equiv a_p(f_{36.4.a.a})\pmod{p^3},
\qquad
S_{p,1/6}\equiv a_p(f_{72.4.a.b})\pmod{p^3}.
\]
Li--Long--Tu \cite{MR4445932} identify the quadratic twists
\[
a_p(f_{36.4.a.a})
=
\left(\frac{-3}{p}\right)a_p(f_{12.4.a.a}),
\qquad
a_p(f_{72.4.a.b})
=
\left(\frac{-3}{p}\right)a_p(f_{24.4.a.a}).
\]
Consequently, for $p\geq7$, Theorem~\ref{thm:main} reduces in both cases to
the hypergeometric congruence
\begin{equation}\label{eq:target-bridge}
	H_{p,\rho}
	\equiv
	\left(\frac{-3}{p}\right)S_{p,\rho}
	\pmod{p^3},
	\qquad
	\rho\in\left\{\frac13,\frac16\right\}.
\end{equation}

It therefore remains to establish \eqref{eq:target-bridge}.  The proofs of
the two cases follow the same global strategy.  We first deform the truncated
${}_7F_6(1)$ series to a terminating specialization.  At the terminating
point, Mishev's symmetry together with classical Whipple and Sears
transformations reduces the problem to a shifted rank-four hypergeometric
sum.  The remaining finite factors are simplified modulo $p^3$, and a
normalized $p$-adic deformation transports the shifted rank-four sum back
to $S_{p,\rho}$.  The two cases share this global structure, while the
initial valuation argument and certain endpoint $p$-adic gamma calculations
are case-dependent.
\section{The Cubic Deformation}
\label{sec:cubic-deformation}
Throughout the remainder of the proof, the prime $p\geq7$ is fixed.
We therefore suppress the dependence on $p$ from the notation for
auxiliary quantities introduced below, while retaining the notation
$H_{p,\rho}$ and $S_{p,\rho}$ for the original truncated series.  Put
\[
N:=\frac{p-1}{2},
\qquad
x:=\frac p2=N+\frac12,
\]
and let $\omega$ be a primitive cube root of unity, so that
\[
1+\omega+\omega^2=0.
\]
When $p\equiv1\pmod3$, we may take $\omega\in\mathbb Q_p$; when
$p\equiv2\pmod3$, we work in the unramified quadratic extension
$\mathbb Q_p(\omega)$.  In either case, $v_p$ denotes the extension of
the $p$-adic valuation normalized by $v_p(p)=1$.

The purpose of this section is to replace the original truncation
$H_{p,\rho}$ by a terminating very-well-poised ${}_7F_6(1)$ series
without changing its value modulo $p^3$.  Define
\begin{equation}\label{eq:H-rho-t}
	H_\rho(t)
	:=
	{}_7F_6\!\left[
	\begin{matrix}
		\frac12,\frac54,
		\frac12+t,\frac12+\omega t,\frac12+\omega^2t,
		\rho,1-\rho\\[1mm]
		\frac14,
		1-t,1-\omega t,1-\omega^2t,
		\frac32-\rho,\frac12+\rho
	\end{matrix};1
	\right].
\end{equation}
The deformation is adapted to the cyclic symmetry of the three repeated
parameters.  Indeed, replacing $t$ by $\omega t$ cyclically
permutes the three shifted upper parameters and the corresponding three
denominator parameters.  Hence
\[
H_\rho(\omega t)=H_\rho(t).
\]
It follows that the expansion of $H_\rho(t)$ about $t=0$ contains only
powers of $t^3$.  This cubic symmetry is reflected algebraically in the
identity
\begin{equation}\label{eq:cubic-identity}
	(y+t)(y+\omega t)(y+\omega^2t)=y^3+t^3.
\end{equation}
Thus the first nontrivial deformation term is cubic, making the
specialization $t=-p/2$ naturally suited to congruences modulo $p^3$.

At the same time, since $x=p/2=N+\frac12$, the specialization
$t=-x=-p/2$ gives
\[
\frac12+t=-N.
\]
Thus one upper parameter becomes a nonpositive integer, so
$H_\rho(-p/2)$ terminates at $k=N$.

Let $b_{\rho,k}$ denote the $k$th summand of $H_\rho(0)$.  Since
\[
\frac{(\frac54)_k}{(\frac14)_k}=4k+1,
\]
we have
\begin{equation}\label{eq:b-rho-k}
	b_{\rho,k}
	=
	(4k+1)
	\frac{(\frac12)_k^4(\rho)_k(1-\rho)_k}
	{(k!)^4(\frac32-\rho)_k(\frac12+\rho)_k}.
\end{equation}
In particular,
\begin{equation}\label{eq:H-as-sum}
	H_{p,\rho}=\sum_{k=0}^{p-1} b_{\rho,k}.
\end{equation}
For $k\leq N$, the ratio of the $k$th summand of $H_\rho(t)$ to
$b_{\rho,k}$ is independent of $\rho$.  Using
\eqref{eq:cubic-identity}, we obtain
\begin{equation}\label{eq:Rk}
	R_k(t)
	=
	\prod_{j=0}^{k-1}
	\frac{(j+\frac12)^3+t^3}{(j+\frac12)^3}
	\frac{(j+1)^3}{(j+1)^3-t^3}.
\end{equation}
For $0\leq j<N$, both $j+\frac12$ and $j+1$ are $p$-adic units.
Hence, at $t=-p/2$,
\begin{equation}\label{eq:Rk-expansion}
	R_k(-p/2)
	=
	1-\frac{p^3}{8}C_k+O(p^6),
	\qquad
	C_k:=
	\sum_{j=0}^{k-1}
	\left(
	\frac{1}{(j+\frac12)^3}
	+
	\frac{1}{(j+1)^3}
	\right)
	\in\Zp.
\end{equation}
Thus the deformation error is already of order $p^3$.  The remaining
issue is whether the coefficient $b_{\rho,k}$ can lower its
$p$-adic order, and this is precisely where the two cases differ. We begin with the case $\rho=\frac16$, where the deformation
congruence holds termwise.  The case $\rho=\frac13$ follows the same
pattern but contains an exceptional block requiring an additional
cancellation argument.

\subsection{The case $\rho=\frac16$: no exceptional block}

\leavevmode\par
By \eqref{eq:b-rho-k},
\[
b_{1/6,k}
=
(4k+1)
\frac{
	(\frac12)_k^4(\frac16)_k(\frac56)_k
}{
	(k!)^4(\frac43)_k(\frac23)_k
}.
\]
Thus the rational Pochhammer symbols relevant to the valuation analysis are
$(\frac12)_k$, $(\frac16)_k$, $(\frac56)_k$ in the numerator and
$(\frac43)_k$, $(\frac23)_k$ in the denominator.

\begin{lemma}\label{lem:116-initial}
	For $0\leq k\leq N$, one has
	\[
	v_p(b_{1/6,k})\geq0.
	\]
\end{lemma}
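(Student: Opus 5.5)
The plan is to compute the $p$-adic valuation of each Pochhammer factor in the formula for $b_{1/6,k}$ displayed just before the lemma. Throughout we use $0\le k\le N=\frac{p-1}{2}<p$. The argument uses the elementary fact that for a $p$-adic unit $a\in\mathbb Q$ and $k<p$, the shifted values $a+j$ with $0\le j<k$ lie in distinct residue classes mod $p$. Hence
\[
v_p\bigl((a)_k\bigr)\ge 0,\qquad v_p\bigl((a)_k\bigr)\ge1 \iff j_a<k,\qquad v_p\bigl((a)_k\bigr)\le 1 \text{ whenever } a\notin \mathbb Z,\ |a|<p,
\]
where $j_a\in\{0,\dots,p-1\}$ is the residue of $-a$ mod $p$. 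For the denominators we need the sharper claim $v_p((a)_k)=\mathbf 1[j_a<k]$. This holds because $a+j=(s j+r)/s$ with $a=r/s$, $s\in\{3,6\}$, and $|sj+r|<s p<p^2$, so no $p^2$ can occur.

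The harmless factors go first. The factor $4k+1$ is an integer. For $j<k\le N$ we have $1\le j+1\le N<p$, so $k!$ is a unit. Likewise $(\frac12)_k=\prod_{j<k}\frac{2j+1}{2}$ is a unit, since $2j+1\le p-2$. So
\[
v_p(b_{1/6,k})\ \ge\ v_p\bigl((\tfrac16)_k\bigr)+v_p\bigl((\tfrac56)_k\bigr)-v_p\bigl((\tfrac43)_k\bigr)-v_p\bigl((\tfrac23)_k\bigr),
\]
and it suffices to show that any $p$ appearing in a denominator factor is matched by a $p$ in a numerator factor at a smaller (or equal) index. For this we compute the residues $j_a$ for $a=\frac16,\frac56,\frac23,\frac43$, using $j_{4/3}=j_{1/3}-1$.

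We split by $p\bmod 6$.

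If $p=6m+1$ (so $m\ge1$ and $N=3m$), then $j_{1/6}=m$, $j_{5/6}=5m$, $j_{1/3}=2m$ (so $j_{4/3}=2m-1$), and $j_{2/3}=4m$. The factor $(\frac23)_k$ never contains $p$ because $4m>N$. The factor $(\frac43)_k$ contains $p$ exactly when $k\ge 2m$. Since $2m\ge m+1$, in that case $(\frac16)_k$ also contains $p$.

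If $p=6m+5$ (so $N=3m+2$), then $j_{1/6}=5m+4$, $j_{5/6}=m$, $j_{1/3}=4m+3$ (so $j_{4/3}=4m+2$), and $j_{2/3}=2m+1$. Now $(\frac43)_k$ never contains $p$ for $k\le N$. The factor $(\frac23)_k$ contains $p$ exactly when $k\ge 2m+2$, and then $k\ge m+1$, so $(\frac56)_k$ contributes a $p$.

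In both cases the denominator valuation is at most $1$ and is dominated by a numerator valuation, giving $v_p(b_{1/6,k})\ge0$.

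The only delicate step is the residue bookkeeping, namely checking that the denominator's zero index never precedes the compensating numerator's. This is exactly where $\rho=\frac16$ differs from $\rho=\frac13$, in which the analogous comparison fails on a block of $k$. I would double-check it on $p=7$ and $p=11$.
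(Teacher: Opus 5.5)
Your proof is correct and is essentially the paper's argument. You set aside $4k+1$, $k!$ and $(\frac12)_k$, locate the unique $p$-divisible factor in each of $(\frac16)_k,(\frac56)_k,(\frac43)_k,(\frac23)_k$ according to $p \bmod 6$, cap the denominator contributions at one via the size bound $<p^2$, and check that the numerator crossing ($k\ge m+1$) comes no later than the denominator crossing ($k\ge 2m$, resp.\ $k\ge 2m+2$), which is exactly the paper's threshold table. One small caveat: your general aside that $v_p((a)_k)\le 1$ for every non-integral unit $a$ with $|a|<p$ is false as stated (e.g.\ $a=-\frac{51}{100}$, $p=7$, $k=2$), but you never use it, since the bound you actually need, for $a\in\{\frac43,\frac23\}$, is correctly justified by $|3j+r|<3p<p^2$.
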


\begin{proof}
	For $k<p$, the factor $(k!)^4$ is a $p$-adic unit.  Each rational
	Pochhammer symbol displayed above contains at most one factor divisible
	by $p$, since the corresponding linear congruence has a unique solution
	modulo $p$.  After clearing the denominators $2$, $3$, and $6$, every
	relevant positive linear factor is at most $6p-5$.  Since
	\[
	6p-5<p^2
	\qquad (p\geq7),
	\]
	each such $p$-divisible factor has valuation exactly one.
	
	The first crossing thresholds are
	\[
	\begin{array}{c|ccccc}
		&(\frac12)_k&(\frac16)_k&(\frac56)_k
		&(\frac43)_k&(\frac23)_k\\ \hline
		p=6m+1&3m+1&m+1&5m+1&2m&4m+1\\
		p=6m+5&3m+3&5m+5&m+1&4m+3&2m+2
	\end{array}
	\]
	
	If $p=6m+1$, then $N=3m$.  The denominator factor
	$(\frac43)_k$ first acquires a $p$-factor at $k=2m$, but
	$(\frac16)_k$ has already acquired one at $k=m+1$; moreover,
	$(\frac23)_k$ does not cross before $N$.
	
	If $p=6m+5$, then $N=3m+2$.  The denominator factor
	$(\frac23)_k$ first acquires a $p$-factor at $k=2m+2$, but
	$(\frac56)_k$ has already acquired one at $k=m+1$; moreover,
	$(\frac43)_k$ does not cross before $N$.
	
	The remaining factor $4k+1$ lies in the numerator and therefore cannot
	decrease the valuation.  Thus the denominator never contributes greater
	$p$-adic valuation than the numerator, and hence
	\[
	v_p(b_{1/6,k})\geq0.
	\]
\end{proof}

It follows from \eqref{eq:Rk-expansion} that
\begin{equation}\label{eq:116-initial-termwise}
	b_{1/6,k}R_k(-p/2)
	\equiv
	b_{1/6,k}
	\pmod{p^3},
	\qquad
	0\leq k\leq N.
\end{equation}

It remains to remove the upper half of the original truncation.
For $N<k<p$, the four copies of $(\frac12)_k$ contribute valuation
$4$, and at least one of $(\frac16)_k$ and $(\frac56)_k$ contributes
one further $p$-factor.  The denominator factors $(\frac43)_k$ and
$(\frac23)_k$ contribute at most one $p$-factor each.  Hence
\begin{equation}\label{eq:116-tail-valuation}
	v_p(b_{1/6,k})\geq 4+1-2=3,
	\qquad
	N<k<p.
\end{equation}
Combining \eqref{eq:116-initial-termwise} with
\eqref{eq:116-tail-valuation} and using
\eqref{eq:H-as-sum}, we obtain
\begin{equation}\label{eq:initial-bridge-116}
	H_{p,1/6}
	\equiv
	H_{1/6}(-p/2)
	\pmod{p^3}.
\end{equation}

\subsection{The case $\rho=\frac13$: cancellation of the exceptional block}
\leavevmode\par

By \eqref{eq:b-rho-k},
\[
b_{1/3,k}
=
(4k+1)
\frac{
	(\frac12)_k^4(\frac13)_k(\frac23)_k
}{
	(k!)^4(\frac76)_k(\frac56)_k
}.
\]
Thus the rational Pochhammer symbols relevant to the valuation analysis are
$(\frac12)_k$, $(\frac13)_k$, $(\frac23)_k$ in the numerator and
$(\frac76)_k$, $(\frac56)_k$ in the denominator. The situation is slightly subtler than in the case $\rho=\frac16$.
For $k<p$, the factor $(k!)^4$ is a $p$-adic unit, while
$b_{1/3,k}$ can have valuation $-1$ on a short interval.
Consequently, \eqref{eq:Rk-expansion} does not yield a termwise
congruence modulo $p^3$ throughout $0\leq k\leq N$.
We shall show that the resulting order-$p^2$ errors cancel in pairs
under the involution
\[
k\longmapsto N-k.
\]

Write $p=6m+1$ or $p=6m+5$, and define
\[
I_p:=
\begin{cases}
	\{m,m+1,\ldots,2m\},&p=6m+1,\\
	\{m+1,m+2,\ldots,2m+1\},&p=6m+5.
\end{cases}
\]
The first crossing thresholds are
\[
\begin{array}{c|ccccc}
	&(\frac12)_k&(\frac13)_k&(\frac23)_k
	&(\frac76)_k&(\frac56)_k\\ \hline
	p=6m+1&3m+1&2m+1&4m+1&m&5m+1\\
	p=6m+5&3m+3&4m+4&2m+2&5m+4&m+1
\end{array}
\]
and therefore
\begin{equation}\label{eq:113-initial-valuations}
	v_p(b_{1/3,k})\geq0
	\qquad
	(k\notin I_p,\ 0\leq k\leq N),
\end{equation}
whereas
\[
v_p(b_{1/3,k})\geq-1
\qquad
(k\in I_p).
\]
At the possible central index $k=(p-1)/4$, the factor
$4k+1=p$ raises the valuation by one.

For $k\in I_p$, define
\[
q_k:=p\,b_{1/3,k}\pmod p.
\]
\begin{lemma}\label{lem:C-symmetry}
	For $k\in I_p$,
	\[
	C_{N-k}\equiv C_k\pmod p.
	\]
\end{lemma}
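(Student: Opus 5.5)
The congruence holds for every $0\le k\le N$, not just for $k\in I_p$. The plan is to recognize $C_k$ as a truncated harmonic sum of order three, and then use the reflection $i\mapsto p-i$ together with the vanishing of the full harmonic sum modulo $p$.

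\emph{Step 1: rewrite $C_k$.} Write $\frac{1}{(j+\frac12)^3}=\frac{8}{(2j+1)^3}$ and $\frac{1}{(j+1)^3}=\frac{8}{(2j+2)^3}$. As $j$ runs over $0\le j\le k-1$, the integers $2j+1$ and $2j+2$ together run over $1,2,\dots,2k$ exactly once each. Hence
\[
C_k=8\sum_{i=1}^{2k}\frac{1}{i^3}=:8\,\mathcal H_{2k},
\]
and in particular $C_{N-k}=8\,\mathcal H_{p-1-2k}$, since $2N=p-1$.

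\emph{Step 2: the full sum vanishes modulo $p$.} Every index $1\le i\le p-1$ is a $p$-adic unit, so $\mathcal H_{p-1}$ lies in $\Zp$. Modulo $p$ we have
\[
\mathcal H_{p-1}\equiv\sum_{i=1}^{p-1}i^{p-4}\pmod p.
\]
The power sum $\sum_{i=1}^{p-1}i^{e}$ is $\equiv 0\pmod p$ unless $(p-1)\mid e$. With $e=p-4$ this would require $(p-1)\mid 3$, which fails because $p\ge7$. Therefore $\mathcal H_{p-1}\equiv0\pmod p$.

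\emph{Step 3: reflection.} Put $n=2k$, so $0\le n\le p-1$. Then
\[
\mathcal H_{p-1-n}=\mathcal H_{p-1}-\sum_{i=p-n}^{p-1}\frac1{i^3}.
\]
Substituting $i=p-i'$ with $1\le i'\le n$, and using $(p-i')^{-3}\equiv -i'^{-3}\pmod p$ (all quantities are units), we get
\[
\mathcal H_{p-1-n}\equiv 0+\sum_{i'=1}^{n}\frac{1}{i'^3}=\mathcal H_n\pmod p.
\]
Multiplying by $8$ gives $C_{N-k}\equiv C_k\pmod p$. This holds for all $0\le k\le N$, and a fortiori for $k\in I_p$.

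There is no real obstacle. The only point needing care is Step 2: the vanishing of $\mathcal H_{p-1}$ modulo $p$ uses $p-1\nmid 3$, which is guaranteed by the standing assumption $p\ge7$.
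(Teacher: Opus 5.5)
Your proof is correct and is essentially the paper's argument. After your rewriting $C_k=8\sum_{i=1}^{2k}i^{-3}$, the paper's pairing $c_{N-1-j}\equiv -c_j\pmod p$ becomes exactly your reflection $i\mapsto p-i$, and both proofs combine it with $C_N\equiv 0\pmod p$. The only difference is that the paper obtains $C_N\equiv0$ from that same antisymmetry ($C_N\equiv -C_N$ with $p$ odd) rather than from the power-sum lemma, so the restriction $p\ge7$ plays no role in this step.
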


\begin{proof}
	Put
	\[
	c_j:=
	\frac{1}{(j+\frac12)^3}
	+
	\frac{1}{(j+1)^3},
	\qquad
	C_k=\sum_{j=0}^{k-1}c_j.
	\]
	Since $N\equiv-\frac12\pmod p$,
	\[
	N-\frac12-j\equiv-(j+1)\pmod p,
	\qquad
	N-j\equiv-\left(j+\frac12\right)\pmod p,
	\]
	and hence
	\[
	c_{N-1-j}\equiv-c_j\pmod p.
	\]
	It follows that $C_N\equiv0\pmod p$.  Moreover,
	\[
	C_N-C_{N-k}
	=
	\sum_{j=N-k}^{N-1}c_j
	\equiv
	-\sum_{j=0}^{k-1}c_j
	=
	-C_k
	\pmod p,
	\]
	which gives the claim.
\end{proof}

\begin{lemma}\label{lem:q-antisymmetry}
	For $k\in I_p$,
	\[
	q_{N-k}\equiv-q_k\pmod p.
	\]
\end{lemma}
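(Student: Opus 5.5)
The approach is to compare the two summands $b_{1/3,k}$ and $b_{1/3,N-k}$ directly, using the standard reflection for Pochhammer symbols modulo $p$. First note that $k\mapsto N-k$ maps $I_p$ to itself: $m+2m=3m=N$ when $p=6m+1$, and $(m+1)+(2m+1)=3m+2=N$ when $p=6m+5$.

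The basic tool is the following. For a rational $a\in\Zp$ and $0\le k\le N$, write $(a)_N=(a)_k\,(a+k)_{N-k}$. Reversing the order of the factors in $(a+k)_{N-k}$ gives
\[
(a+k)_{N-k}=(-1)^{N-k}\,(1-a-N)_{N-k}.
\]
Since $N\equiv-\tfrac12\pmod p$, we have $1-a-N\equiv \tfrac32-a\pmod p$. The upper parameters pair under $a\mapsto\tfrac32-a$ as follows:
\[
\tfrac12\leftrightarrow 1,\qquad \tfrac13\leftrightarrow\tfrac76,\qquad \tfrac23\leftrightarrow\tfrac56.
\]
These are exactly the lower parameters, in a well-poised pattern with respect to $\tfrac32$. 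So, up to the unit and $p$-factor bookkeeping below, one expects
\[
(a)_{N-k}\;\approx\;\pm\frac{(a)_N}{(\tfrac32-a)_k}.
\]
Applying this to each of the twelve Pochhammer factors of $b_{1/3,N-k}$, and using that the parameter set is closed under $a\mapsto\tfrac32-a$, the constant factors $(a)_N$ cancel. We should obtain
\[
\frac{(\tfrac12)_{N-k}^4(\tfrac13)_{N-k}(\tfrac23)_{N-k}}{((N-k)!)^4(\tfrac76)_{N-k}(\tfrac56)_{N-k}}
\;\approx\;\pm\frac{(\tfrac12)_{k}^4(\tfrac13)_{k}(\tfrac23)_{k}}{(k!)^4(\tfrac76)_{k}(\tfrac56)_{k}}.
\]
The sign $(-1)^{N-k}$ appears $8$ times in the numerator and $4$ times in the denominator, so it cancels. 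The prefactor is antisymmetric: $4(N-k)+1=2p-1-4k\equiv-(4k+1)\pmod p$. Hence the leading parts satisfy $q_{N-k}\equiv-q_k$.

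The delicate point is that the congruence $1-a-N\equiv\tfrac32-a$ only holds modulo $p$. Moreover, the factors involved may contain $p$-divisible terms, and $b_{1/3,k}$ itself has valuation $-1$ on $I_p$. So I would not argue with the shifted parameters directly. Instead, I would carry out the reversal on the explicit linear factors. Multiply each $(a)_k$ by the denominator of $a$, so it becomes a product of integers of the form $d j+c$. In $(a+k)_{N-k}$ the factors are $a+k,\dots,a+N-1$. Each factor $a+i$ is congruent modulo $p$ to $-(\tfrac32-a+(N-1-i))$, because $a+i+\tfrac32-a+N-1-i=N+\tfrac12=p/2\equiv0$.

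This gives an exact factorwise correspondence modulo $p$ between the factors of $(a+k)_{N-k}$ and those of $(\tfrac32-a)_{N-k}$. I will write this correspondence in terms of $(\tfrac32-a)_{N-k}$ rather than $(\tfrac32-a)_k$, i.e. as a relation between $(a)_N/(a)_k$ and $(\tfrac32-a)_{N-k}$. The $p$-divisible factors must be matched among themselves, and I need to check that their $p$-free parts agree modulo $p$:
\begin{itemize}
\item a factor $a+i$ divisible by $p$ corresponds to the factor $\tfrac32-a+N-1-i$, which is congruent to $-(a+i)$ modulo $p$, hence is also divisible by $p$;
\item both such factors are less than $p^2$, as in Lemma~\ref{lem:116-initial}, so each has valuation exactly one;
\item the ratio of the two factors is $-1+O(p)$ in exact arithmetic, since their sum is $p/2$ and each is $p$ times a unit-sized number.
\end{itemize}
Here I need to verify that $a+i=pu$ and $\tfrac32-a+N-1-i=pu'$ satisfy $u+u'=\tfrac12$. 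This may fail to give $u'\equiv-u$, in which case the $p$-divisible factors need separate treatment. I would instead pair each $p$-divisible factor with its partner in the same parameter orbit. For $p=6m+1$ these are the $p$-factors of $(\tfrac13)_k$ and $(\tfrac76)_k$, which first appear at $k=2m+1$ and $k=m$ (and the symmetric ones for $p=6m+5$). I would then compute $p\,b$ explicitly modulo $p$ using the crossing table.

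I expect this bookkeeping of the $p$-divisible factors to be the main obstacle, together with the central index $k=(p-1)/4$. In that case $k=N-k$, and the claim reduces to $2q_k\equiv0$, i.e. $q_k\equiv0$. This does hold, because $4k+1=p$ makes $v_p(b_{1/3,k})\ge0$.

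If the factor-matching becomes messy, my fallback is to use the $p$-adic gamma function. Write each ratio $(a)_k$ as $\pm\,\Gamp(a+k)/\Gamp(a)$ times the explicit $p$-factor, and then use the reflection $\Gamp(x)\Gamp(1-x)=\pm1$ together with $N\equiv-\tfrac12$. This packages the same argument with the signs tracked automatically.
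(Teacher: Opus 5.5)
There is a genuine gap, and it lies exactly at the point you flagged as delicate.

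\textbf{The constants do not cancel.} Your claim that the factors $(a)_N$ cancel is false. The map $a\mapsto\frac32-a$ sends the upper parameters $\frac12,\frac12,\frac12,\frac12,\frac13,\frac23$ to the lower ones $1,1,1,1,\frac76,\frac56$. Reversing all twelve Pochhammer symbols via $(a)_{N-k}=(a)_N/(a+N-k)_k$ and $(a+N-k)_k=(-1)^k(\frac32-a-\frac p2)_k$ therefore gives the exact identity
\[
\frac{b_{1/3,N-k}}{4(N-k)+1}
=T_N\,
\frac{(\frac12-\frac p2)_k^4(\frac13-\frac p2)_k(\frac23-\frac p2)_k}
{(1-\frac p2)_k^4(\frac76-\frac p2)_k(\frac56-\frac p2)_k},
\qquad
T_N:=\frac{b_{1/3,N}}{4N+1}.
\]
So a global constant survives. $T_N$ is a $p$-adic unit, but it is not $\pm1$ modulo $p$. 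For example, $T_3=625/(16\cdot11\cdot13\cdot323)\equiv-2\pmod 7$. In general $T_N\equiv-2$, because of the single $p$-divisible pair $\frac p6$ versus $-\frac p3$ in $(\frac76)_N$ (for $p=6m+1$) or in $(\frac56)_N$ (for $p=6m+5$).

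\textbf{The matched $p$-divisible factors do not have ratio $-1$.} Your third bullet fails, as you suspected. The matched $p$-divisible factors are $\frac p3$ and $\frac p6$, whose sum is $\frac p2$, so their ratio is $-2$ or $-\frac12$, never $-1+O(p)$. In the identity above, for $k\in I_p$ exactly one such mismatch occurs. It is $-\frac p3$ in $(\frac76-\frac p2)_k$ (resp.\ $(\frac56-\frac p2)_k$) against $\frac p6$ in $(\frac76)_k$ (resp.\ $(\frac56)_k$). All other factor pairs agree modulo $p$. This mismatch contributes a factor $-\frac12$.

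\textbf{What this means for your argument.} The lemma holds only because $T_N\cdot(-\frac12)\equiv1$. Your write-up instead treats both discrepancies as trivial. Your fallback, ``compute $p\,b$ explicitly using the crossing table'', leaves exactly this computation undone. To complete your route you must prove two things:
\begin{itemize}
\item $T_N\equiv-2$. One way is to use the reversal $\prod_{\rm upper}(a)_N=\prod_{\rm lower}(b-\frac p2)_N$, so that $T_N=\prod_{\rm lower}(b-\frac p2)_N/(b)_N$.
\item The $-\frac12$ mismatch described above.
\end{itemize}
After that, multiplying by $p$ and using $4(N-k)+1\equiv-(4k+1)$ gives the result.

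\textbf{The paper's route.} The paper's ratio argument sidesteps $T_N$ entirely. From $r(z)\,r(-z-\frac32)=1$, the quotient $b_{1/3,N-k}/b_{1/3,k}$ is constant modulo $p$ along paired indices of $I_p$. Hence only the centre needs to be evaluated: $r_c\equiv-1$ when $N$ is odd, or $r_{c-1}r_c\equiv-1$ when $4c+1=p$.
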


\begin{proof}
	The ratio of consecutive terms is
	\[
	r_k:=
	\frac{b_{1/3,k+1}}{b_{1/3,k}}
	=
	\frac{4k+5}{4k+1}
	\frac{(k+\frac12)^4(k+\frac13)(k+\frac23)}
	{(k+1)^4(k+\frac56)(k+\frac76)}.
	\]
	As a rational function of $z$,
	\[
	r(z)\,r\left(-z-\frac32\right)=1.
	\]
	Since
	\[
	N-k-1\equiv-k-\frac32\pmod p,
	\]
	we obtain
	\[
	r_kr_{N-k-1}\equiv1\pmod p
	\]
	away from the possible central zero or pole.  Thus
	$q_{N-k}/q_k$ is constant as one moves outward through paired indices.
	
	If $N$ is odd, let $c=(N-1)/2$.  Then $N-c=c+1$ and
	$c\equiv-3/4\pmod p$, so direct substitution gives
	\[
	r_c\equiv-1\pmod p.
	\]
	If $N$ is even, let $c=N/2=(p-1)/4$.  Then $q_c=0$, and after
	cancelling the central zero and pole one obtains
	\[
	r_{c-1}r_c\equiv-1\pmod p.
	\]
	In either case the sign $-1$ propagates through all paired indices,
	which proves the result.
\end{proof}

Since $I_p$ is stable under $k\mapsto N-k$,
Lemmas~\ref{lem:C-symmetry} and \ref{lem:q-antisymmetry} imply
\[
\sum_{k\in I_p}q_kC_k\equiv0\pmod p.
\]
Hence the contribution of the exceptional block to the deformation
error satisfies
\begin{equation}\label{eq:113-exceptional-cancellation}
	\sum_{k\in I_p}
	b_{1/3,k}\bigl(R_k(-p/2)-1\bigr)
	\equiv
	-\frac{p^2}{8}
	\sum_{k\in I_p}(p\,b_{1/3,k})C_k
	\equiv0
	\pmod{p^3}.
\end{equation}

For $k\notin I_p$, \eqref{eq:113-initial-valuations} together with
\eqref{eq:Rk-expansion} gives the termwise congruence
\[
b_{1/3,k}R_k(-p/2)
\equiv
b_{1/3,k}
\pmod{p^3}.
\]
Combining this with \eqref{eq:113-exceptional-cancellation}, we obtain
\begin{equation}\label{eq:113-initial-lower-half}
	\sum_{k=0}^{N}
	b_{1/3,k}R_k(-p/2)
	\equiv
	\sum_{k=0}^{N}b_{1/3,k}
	\pmod{p^3}.
\end{equation}

Finally, for $N<k<p$, the four copies of $(\frac12)_k$ contribute
valuation $4$.  Inspection of the remaining crossings in the table above
shows that the numerator and denominator contributions have net valuation
at least zero.  Therefore
\begin{equation}\label{eq:113-tail-valuation}
	v_p(b_{1/3,k})\geq4,
	\qquad
	N<k<p.
\end{equation}

Using \eqref{eq:H-as-sum}, \eqref{eq:113-initial-lower-half}, and
\eqref{eq:113-tail-valuation}, we conclude that
\begin{equation}\label{eq:initial-bridge-113}
	H_{p,1/3}
	\equiv
	H_{1/3}(-p/2)
	\pmod{p^3}.
\end{equation}
Thus the two different valuation arguments lead to the same
terminating specialization.
\begin{proposition}\label{prop:initial-bridge}
	For $\rho\in\{\frac13,\frac16\}$, one has
	\begin{equation}\label{eq:initial-bridge}
		H_{p,\rho}\equiv H_\rho(-p/2)\pmod{p^3}.
	\end{equation}
\end{proposition}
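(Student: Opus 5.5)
The proposition simply collects the two case analyses carried out above, so the plan is to assemble them.

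First, write both sides as finite sums. By \eqref{eq:H-as-sum}, $H_{p,\rho}=\sum_{k=0}^{p-1}b_{\rho,k}$. At $t=-p/2$ the upper parameter $\frac12+t=-N$ makes $H_\rho(-p/2)$ terminate at $k=N$. For $k\le N$, its $k$th summand is $b_{\rho,k}R_k(-p/2)$ with $R_k$ as in \eqref{eq:Rk}. All denominators $(j+1)^3-t^3$ with $j<N$ are $p$-adic units at $t=-p/2$, so each such summand is well defined.

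Thus
\[
H_{p,\rho}-H_\rho(-p/2)=\sum_{k=0}^{N}b_{\rho,k}\bigl(1-R_k(-p/2)\bigr)+\sum_{k=N+1}^{p-1}b_{\rho,k},
\]
and it suffices to show that both pieces vanish modulo $p^3$.

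For $\rho=\frac16$:
\begin{itemize}
\item The first sum is handled termwise. Lemma~\ref{lem:116-initial} gives $v_p(b_{1/6,k})\ge0$, and \eqref{eq:Rk-expansion} gives $R_k(-p/2)-1\in p^3\Zp$. Together these yield \eqref{eq:116-initial-termwise}.
\item The tail vanishes by the valuation bound \eqref{eq:116-tail-valuation}.
\item This is exactly \eqref{eq:initial-bridge-116}.
\end{itemize}

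For $\rho=\frac13$:
\begin{itemize}
\item Split the first sum into $k\notin I_p$ and $k\in I_p$.
\item Off $I_p$, the bound \eqref{eq:113-initial-valuations} again gives termwise congruences.
\item On $I_p$, the summands are only $O(p^2)$ individually. The $p^2$-coefficient is $-\frac18 q_kC_k$ modulo $p$. Using $pb_{1/3,k}\in\Zp$, the $O(p^6)$ remainder in \eqref{eq:Rk-expansion} contributes $O(p^5)$, which is harmless.
\item The $p^2$-coefficients cancel under the involution $k\mapsto N-k$. This uses Lemma~\ref{lem:C-symmetry} and Lemma~\ref{lem:q-antisymmetry}, together with the stability of $I_p$ under the involution.
\item A fixed point $k=N/2$ of the involution would force $q_k\equiv -q_k$, hence $q_k\equiv 0$ since $p$ is odd, so it causes no trouble.
\item These steps give \eqref{eq:113-exceptional-cancellation} and \eqref{eq:113-initial-lower-half}.
\item The tail vanishes by \eqref{eq:113-tail-valuation}, which yields \eqref{eq:initial-bridge-113}.
\end{itemize}

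The main obstacle is the exceptional block for $\rho=\frac13$, specifically the antisymmetry of $q_k$. There I would check the following directly:
\begin{itemize}
\item the functional equation $r(z)r(-z-\frac32)=1$, since the parameters pair up as $\frac12\leftrightarrow1$, $\frac13\leftrightarrow\frac76$, $\frac23\leftrightarrow\frac56$, $\frac14\leftrightarrow\frac54$ under $z\mapsto -z-\frac32$;
\item the central normalisation $r_c\equiv-1$, or $r_{c-1}r_c\equiv-1$ when the central factor $4k+1=p$ appears.
\end{itemize}
I would also check that the consecutive-ratio propagation stays inside $I_p$, where no other zero or pole of $r_k$ modulo $p$ occurs. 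Once these are confirmed, the two cases combine to give \eqref{eq:initial-bridge}.
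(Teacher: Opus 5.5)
Your proposal is correct and follows the paper's proof essentially step for step. For $\rho=\frac16$ you use the termwise bound plus the tail estimate; for $\rho=\frac13$ you split off the exceptional block $I_p$, cancel its $p^2$-contributions in pairs under $k\mapsto N-k$ via Lemmas~\ref{lem:C-symmetry} and~\ref{lem:q-antisymmetry}, and close with the tail bound \eqref{eq:113-tail-valuation}, while also spelling out the $O(p^5)$ remainder and the central fixed point, which the paper leaves implicit.
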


\section{Mishev's $W(D_5)$ Reflection}

The cubic deformation of the previous section produces a terminating
very-well-poised ${}_7F_6(1)$ series at $t=-p/2$.  Our next goal is to
move this series to a parameter configuration in which Whipple's
transformation can be applied effectively.  We do this by embedding the
deformation into Mishev's completed $L$-function and applying a
fundamental reflection in its $W(D_5)$ symmetry group.

On the Saalsch\"utz hyperplane
\begin{equation}\label{eq:saalschutz-hyperplane}
	e+f+g-a-b-c-d=1,
\end{equation}
Mishev's completed function admits the very-well-poised representation
\cite{MR2834915}
\begin{align}
	L(a,b,c,d;e;f,g)
	&=
	\frac{\Gamma(1+d+g-e)}
	{\pi\Gamma(g)\Gamma(1+g-e)\Gamma(f-d)
		\Gamma(1+a+d-e)\Gamma(1+b+d-e)\Gamma(1+c+d-e)}
	\notag\\
	&\qquad\times
	{}_7F_6\!\left[
	\begin{matrix}
		\alpha,\ 1+\frac{\alpha}{2},\ g-a,\ g-b,\ g-c,\ d,\ 1+d-e\\[1mm]
		\frac{\alpha}{2},\ 1+a+d-e,\ 1+b+d-e,\ 1+c+d-e,\ 1+g-e,\ g
	\end{matrix};1
	\right],
	\label{eq:Mishev-L-vwp}
\end{align}
where
\[
\alpha=d+g-e.
\]

The function $L$ is invariant under the fundamental reflection
\cite{MR2834915}
\begin{equation}\label{eq:Mishev-reflection}
	L(a,b,c,d;e;f,g)
	=
	L\bigl(\mathcal A(a,b,c,d;e;f,g)\bigr),
\end{equation}
where
\begin{equation}\label{eq:Mishev-A}
	\mathcal A(a,b,c,d;e;f,g)
	=
	\bigl(
	a,b,g-c,g-d;
	1+a+b-f;
	1+a+b-e,g
	\bigr).
\end{equation}

For $\rho\in\{\frac13,\frac16\}$, consider the path
\begin{equation}\label{eq:Qrho}
	Q_\rho(t)
	=
	\left(
	\rho-t,\rho-\omega t,\rho-\omega^2t,\rho;
	2\rho;
	\frac12+\rho,\frac12+\rho
	\right).
\end{equation}
Since $1+\omega+\omega^2=0$, the path $Q_\rho(t)$ satisfies the
Saalsch\"utz condition \eqref{eq:saalschutz-hyperplane}.

Substituting \eqref{eq:Qrho} into the representation
\eqref{eq:Mishev-L-vwp} gives
\[
L(Q_\rho(t))
=
M_{Q,\rho}(t)H_\rho(t),
\]
where
\[
M_{Q,\rho}(t)
=
\frac{\Gamma(\frac32)}
{\pi
	\Gamma(\frac12+\rho)
	\Gamma(\frac32-\rho)
	\Gamma(\frac12)
	\Gamma(1-t)
	\Gamma(1-\omega t)
	\Gamma(1-\omega^2t)}.
\]

Applying the reflection \eqref{eq:Mishev-reflection} to
\eqref{eq:Qrho}, we obtain
\[
\mathcal A Q_\rho(t)
=
\left(
\rho-t,\rho-\omega t,\frac12+\omega^2t,\frac12;
\frac12+\rho+\omega^2t;
1+\omega^2t,\frac12+\rho
\right).
\]
A second application of \eqref{eq:Mishev-L-vwp} gives
\[
L(\mathcal A Q_\rho(t))
=
M_{\mathcal A,\rho}(t)W_\rho(t),
\]
where
\[
M_{\mathcal A,\rho}(t)
=
\frac{\Gamma(\frac32-\omega^2t)}
{\pi
	\Gamma(\frac12+\rho)
	\Gamma(1-\omega^2t)
	\Gamma(\frac12+\omega^2t)
	\Gamma(1+\omega t)
	\Gamma(1+t)
	\Gamma(\frac32-\rho)},
\]
and
\[
W_\rho(t)
=
{}_7F_6\!\left[
\begin{matrix}
	\frac12-\omega^2t,\
	\frac54-\frac{\omega^2t}{2},\
	\frac12+t,\
	\frac12+\omega t,\
	\rho-\omega^2t,\
	\frac12,\
	1-\rho-\omega^2t
	\\[1mm]
	\frac14-\frac{\omega^2t}{2},\
	1+\omega t,\
	1+t,\
	\frac32-\rho,\
	1-\omega^2t,\
	\frac12+\rho
\end{matrix};1
\right].
\]

By the reflection identity \eqref{eq:Mishev-reflection},
\[
L(Q_\rho(t))
=
L(\mathcal A Q_\rho(t)).
\]
Comparing the two representations above, all factors involving $\rho$
cancel from the quotient of the gamma prefactors, and
$\Gamma(\frac12)/\Gamma(\frac32)=2$.  Hence
\begin{equation}\label{eq:Mishev-universal-quotient}
	H_\rho(t)
	=
	2
	\frac{
		\Gamma(\frac32-\omega^2t)
		\Gamma(1-t)
		\Gamma(1-\omega t)
	}{
		\Gamma(\frac12+\omega^2t)
		\Gamma(1+\omega t)
		\Gamma(1+t)
	}
	W_\rho(t).
\end{equation}

The crucial point is that the prefactor in
\eqref{eq:Mishev-universal-quotient} is independent of $\rho$.
Thus, after the case-dependent valuation analysis of Section~\ref{sec:cubic-deformation}, the
transformation mechanism is common to both supercongruences.  At
$t=-p/2$, the parameter $\frac12+t=-N$ in $W_\rho(t)$ again forces
termination, and the resulting series is in the form needed for the
Whipple transformation of the next section.

\section{Terminating Whipple and the First Lower-Rank Form}

We now specialize the transformed series $W_\rho(t)$ at $t=-p/2$.
The resulting ${}_7F_6(1)$ terminates because one of its upper
parameters is $-N$.  The purpose of this section is to apply Whipple's
terminating transformation and thereby reduce the problem from a
${}_7F_6(1)$ series to a rank-four hypergeometric series.

We use Whipple's terminating transformation in the standard form
\cite[Theorem~3.4.4]{MR1688958}
\begin{align}
	&{}_7F_6\!\left[
	\begin{matrix}
		a,1+\frac a2,c,d,e,f,g\\[1mm]
		\frac a2,1+a-c,1+a-d,1+a-e,1+a-f,1+a-g
	\end{matrix};1
	\right]
	\notag\\
	&\qquad=
	\mathcal P\,
	{}_4F_3\!\left[
	\begin{matrix}
		1+a-c-d,e,f,g\\
		e+f+g-a,1+a-c,1+a-d
	\end{matrix};1
	\right],
	\label{eq:Whipple}
\end{align}
where
\[
\mathcal P
=
\frac{
	\Gamma(1+a-e)\Gamma(1+a-f)\Gamma(1+a-g)
	\Gamma(1+a-e-f-g)}
{
	\Gamma(1+a)\Gamma(1+a-f-g)
	\Gamma(1+a-e-f)\Gamma(1+a-e-g)
}.
\]

In $W_\rho(-x)$, take
\[
a=\frac12+\om^2x,\qquad
c=\frac12-\om x,\qquad
d=\frac12,
\]
\[
e=-N,\qquad
f=\rho+\om^2x,\qquad
g=1-\rho+\om^2x.
\]
The transformed parameters simplify to
\[
1+a-c-d=-N,
\]
\[
e+f+g-a=1-(1-\om^2)x,
\]
and
\[
1+a-c=1-x,
\qquad
1+a-d=1+\om^2x.
\]
Therefore Whipple's transformation produces the terminating rank-four
series
\begin{equation}\label{eq:Vrho}
	V_\rho
	:=
	{}_4F_3\!\left[
	\begin{matrix}
		-N,-N,\rho+\om^2x,1-\rho+\om^2x\\
		1-(1-\om^2)x,1-x,1+\om^2x
	\end{matrix};1
	\right]_N.
\end{equation}

Since $e=-N$, the gamma quotient in \eqref{eq:Whipple} reduces to
\begin{equation}\label{eq:Whipple-prefactor-rho}
	\mathcal P_\rho
	=
	\frac{
		(\frac32+\omega^2x)_N
		(\frac12-\omega^2x)_N
	}{
		(\frac32-\rho)_N
		(\frac12+\rho)_N
	}.
\end{equation}
It is convenient to isolate the $\rho$-dependent finite factor
\begin{equation}\label{eq:Rrho}
	R_\rho
	:=
	p\,
	\frac{(\frac12)_N^2}
	{(\frac32-\rho)_N(\frac12+\rho)_N}.
\end{equation}

The remaining ordinary-gamma simplification is common to both values
of $\rho$.  Since
\[
x=N+\frac12
\qquad\text{and}\qquad
1+\om+\om^2=0,
\]
we have
\[
\frac12-\om^2x=N+1+\om x,
\qquad
\frac32+\om^2x=1-N-\om x.
\]
Moreover,
\begin{equation}\label{eq:ordinary-gamma-universal}
	\frac{\Gamma(1+x)}{\Gamma(1-x)}
	=
	(-1)^N\frac p2
	\left(\frac12\right)_N^2.
\end{equation}

Substituting these identities into
\eqref{eq:Mishev-universal-quotient} and combining the resulting
gamma factors with \eqref{eq:Whipple-prefactor-rho} gives the exact
factorization
\begin{equation}\label{eq:Whipple-exact-factorization}
	H_\rho(-p/2)=\Pi_\rho V_\rho,
\end{equation}
where
\begin{equation}\label{eq:Pi-rho}
	\Pi_\rho
	=
	(-1)^N R_\rho
	\frac{(N+1+\omega x)_N}{(1+\omega x)_N}.
\end{equation}

\section{The Terminating Sears Transformation}

The Whipple transformation of the previous section has reduced the
problem to the terminating rank-four series $V_\rho$.  We now apply a
terminating Sears transformation.  Its role is to replace the two
$\rho$-dependent parameters involving $\om^2x$ by the common shifts
$\rho-x$ and $1-\rho-x$.  After this step, the remaining dependence on
$\om$ occurs only in the denominator and in a finite prefactor.

We use the terminating Saalsch\"utzian Sears transformation \cite[Corollary~4.2 together with (4.7)]{MR2765604}
\begin{equation}\label{eq:Sears}
	{}_4F_3\!\left[
	\begin{matrix}
		-n,a,b,c\\
		d,e,f
	\end{matrix};1
	\right]
	=
	\frac{(e-a)_n(f-a)_n}{(e)_n(f)_n}
	{}_4F_3\!\left[
	\begin{matrix}
		-n,a,d-b,d-c\\
		d,a-e-n+1,a-f-n+1
	\end{matrix};1
	\right].
\end{equation}

Apply \eqref{eq:Sears} to $V_\rho$ with
\[
n=N,\qquad
a=-N,\qquad
b=\rho+\om^2x,\qquad
c=1-\rho+\om^2x,
\]
and
\[
d=1-(1-\om^2)x,\qquad
e=1-x,\qquad
f=1+\om^2x.
\]
The transformed parameters simplify to
\[
d-b=1-\rho-x,
\qquad
d-c=\rho-x,
\]
and
\[
a-e-N+1=1-x,
\qquad
a-f-N+1=1+(\om-1)x.
\]
Since $-N=\frac12-x$, the resulting series is
\begin{equation}\label{eq:Jrho}
	J_\rho
	:=
	{}_4F_3\!\left[
	\begin{matrix}
		\frac12-x,\frac12-x,\rho-x,1-\rho-x\\
		1-x,1+(\om-1)x,1+(\om^2-1)x
	\end{matrix};1
	\right]_N.
\end{equation}

The Sears prefactor is independent of $\rho$.  Indeed,
\[
\frac{(e-a)_N(f-a)_N}{(e)_N(f)_N}
=
\frac{
	(\frac12)_N(\frac12-\om x)_N
}{
	(1-x)_N(1+\om^2x)_N
}.
\]
Using the reversal identity
\[
(a)_N=(-1)^N(1-a-N)_N,
\]
we obtain
\[
(1-x)_N
=
(-1)^N\left(\frac12\right)_N
\]
and
\[
\left(\frac12-\om x\right)_N
=
(-1)^N(1+(\om-1)x)_N.
\]
Hence
\begin{equation}\label{eq:Sears-prefactor}
	V_\rho
	=
	\frac{(1+(\om-1)x)_N}
	{(1+\om^2x)_N}
	J_\rho.
\end{equation}

A further application of the reversal identity gives
\begin{equation}\label{eq:reversal-K}
	(-1)^N(N+1+\om x)_N
	=
	(1+(\om^2-1)x)_N.
\end{equation}
Combining \eqref{eq:Whipple-exact-factorization},
\eqref{eq:Pi-rho}, \eqref{eq:Sears-prefactor}, and
\eqref{eq:reversal-K}, we obtain the exact factorization
\begin{equation}\label{eq:exact-factorization}
	H_\rho(-p/2)
	=
	R_\rho KJ_\rho,
\end{equation}
where the factor
\begin{equation}\label{eq:K}
	K
	:=
	\frac{
		(1+(\om-1)x)_N
		(1+(\om^2-1)x)_N
	}{
		(1+\om x)_N
		(1+\om^2x)_N
	}
\end{equation}
is independent of $\rho$.

\section{Cubic Denominator Collapse}

The numerator of $J_\rho$ already has the desired common $-x$ shift.
The only remaining dependence on $\omega$ occurs in two denominator
parameters.  We now remove this dependence modulo $p^3$ using the
cubic identity \eqref{eq:cubic-identity} from Section~\ref{sec:cubic-deformation}.

Define
\begin{equation}\label{eq:U-rho}
	U_\rho
	:=
	\sum_{k=0}^{N}
	\frac{
		(\frac12-x)_k^2(\rho-x)_k(1-\rho-x)_k
	}{
		(1-x)_k^4
	}.
\end{equation}
We compare the $k$th summands of $J_\rho$ and $U_\rho$.
Their numerators are identical, so it remains only to compare the
denominators.  We claim that
\begin{equation}\label{eq:cubic-denominator}
	k!\,
	(1+(\omega-1)x)_k
	(1+(\omega^2-1)x)_k
	\equiv
	(1-x)_k^3
	\pmod{p^3}.
\end{equation}

For $0\leq j<k\leq N$, put
\[
y:=j+1-x.
\]
Then $y$ is a $p$-adic unit, and the three factors on the left-hand
side of \eqref{eq:cubic-denominator} contributed at index $j$ are
\[
y+x,\qquad y+\omega x,\qquad y+\omega^2x.
\]
By \eqref{eq:cubic-identity}, their product is $y^3+x^3$.
Since $x=p/2$, we have $x^3=p^3/8$, and hence
\[
(y+x)(y+\omega x)(y+\omega^2x)
\equiv
y^3
\pmod{p^3}.
\]
Multiplying over $j=0,\ldots,k-1$ proves
\eqref{eq:cubic-denominator}.

For $k\leq N$, all denominator factors are $p$-adic units, so
\eqref{eq:cubic-denominator} may be inverted termwise.  We obtain the
following common reduction.

\begin{lemma}\label{lem:cubic-collapse}
	For $\rho\in\{\frac13,\frac16\}$, one has
	\[
	J_\rho\equiv U_\rho\pmod{p^3}.
	\]
\end{lemma}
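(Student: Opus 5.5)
The plan is a termwise comparison of the $k$th summands of $J_\rho$ and $U_\rho$ for $0\le k\le N$, using the congruence \eqref{eq:cubic-denominator} already established above. Both sums run over the same range $0\le k\le N$: $J_\rho$ is truncated at $N$ by \eqref{eq:Jrho}, and in fact terminates there because $\frac12-x=-N$. The $k$th summand of $J_\rho$ is
\[
\frac{A_k}{D_k},\qquad
A_k:=\left(\tfrac12-x\right)_k^2(\rho-x)_k(1-\rho-x)_k,\qquad
D_k:=(1-x)_k\cdot k!\,(1+(\om-1)x)_k(1+(\om^2-1)x)_k .
\]
The $k$th summand of $U_\rho$ is $A_k/D_k'$ with $D_k':=(1-x)_k^4$. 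By \eqref{eq:cubic-denominator}, multiplied by $(1-x)_k$, we have $D_k\equiv D_k'\pmod{p^3}$ in $\Zp[\om]$.

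Next I will check the integrality conditions needed to turn this into a congruence of the quotients.

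\emph{Numerator.} Every factor of $A_k$ has the form $j+\frac12-\frac p2$, $j+\rho-\frac p2$ or $j+1-\rho-\frac p2$. Since $\rho\in\{\frac13,\frac16\}$ and $p\ge7$, only the primes $2$ and $3$ occur in denominators, so each factor lies in $\Zp$ and $A_k\in\Zp$. No lower bound on $v_p(A_k)$ is needed.

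\emph{Denominators.} For $0\le j<k\le N$, the factor $j+1-x$ is congruent to $j+1\not\equiv0\pmod p$, because $1\le j+1\le N<p$. Similarly, $j+1$ and $j+1+(\om^i-1)x\equiv j+1\pmod{p\Zp[\om]}$ are units. Hence $D_k$ and $D_k'$ are units in $\Zp[\om]$, as the text already asserts.

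With these facts,
\[
\frac{A_k}{D_k}-\frac{A_k}{D_k'}=\frac{A_k\,(D_k'-D_k)}{D_kD_k'}\equiv0\pmod{p^3}.
\]
Summing over $0\le k\le N$ gives $J_\rho\equiv U_\rho\pmod{p^3}$ in $\Zp[\om]$. Since $U_\rho\in\mathbb Q$, this congruence is also meaningful in $\Zp$ whenever $J_\rho$ is rational. Since the valuation $v_p$ on the unramified extension $\mathbb Q_p(\om)$ is normalized by $v_p(p)=1$, the modulus $p^3$ has the same meaning as in $\Zp$.

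The only real point of care is the unit claim for the denominators. One must confirm that no index $j\le N-1$ makes $j+1-\frac p2$, or its $\om$-twists, divisible by $p$. This holds because all of these are congruent to $j+1$ modulo $p$. Everything else is routine bookkeeping.
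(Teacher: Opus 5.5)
Your proposal is correct and is essentially the paper's argument: a termwise comparison of summands via \eqref{eq:cubic-denominator}, which the paper derives from $(y+x)(y+\omega x)(y+\omega^2 x)=y^3+x^3\equiv y^3 \pmod{p^3}$ with $y=j+1-x$, followed by inverting the denominators, which are $p$-adic units for $k\le N$. Your explicit checks that the numerators are $p$-integral and that the denominators are units in $\mathbb{Z}_p[\omega]$ spell out what the paper states in one line.
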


\section{The Finite Prefactor and Morley's Congruence}

After the cubic denominator collapse, the remaining universal finite
factor is $K$.  We now simplify it modulo $p^3$ and show that it is
congruent to the square of the central binomial factor
\[
c_N:=\frac{(\frac12)_N}{N!}.
\]

Pairing the conjugate factors in \eqref{eq:K}, for $a=1,\ldots,N$ we have
\[
(a+(\om-1)x)(a+(\om^2-1)x)
=
a^2-3ax+3x^2,
\]
and
\[
(a+\om x)(a+\om^2x)
=
a^2-ax+x^2.
\]
Therefore
\begin{equation}\label{eq:K-product}
	K
	=
	\prod_{a=1}^{N}
	\frac{a^2-3ax+3x^2}{a^2-ax+x^2}.
\end{equation}

For each $a=1,\ldots,N$, put
\[
u:=\frac{x}{a}=\frac{p}{2a}.
\]
Since $a$ is a $p$-adic unit,
\[
\frac{1-3u+3u^2}{1-u+u^2}
=
1-2u+\frac{2u^3}{1-u+u^2}
\equiv
1-\frac pa
\pmod{p^3}.
\]
Multiplying over $a=1,\ldots,N$ gives
\begin{equation}\label{eq:K-binomial}
	K
	\equiv
	\prod_{a=1}^{N}\left(1-\frac pa\right)
	=
	(-1)^N\binom{p-1}{N}
	\pmod{p^3}.
\end{equation}

Morley's congruence \cite{MR1502188} states that
\begin{equation}\label{eq:Morley}
	(-1)^N\binom{p-1}{N}
	\equiv
	4^{p-1}
	\pmod{p^3}.
\end{equation}
Set
\begin{equation}\label{eq:cN}
	c_N
	:=
	\frac{(\frac12)_N}{N!}
	=
	\frac{1}{4^N}\binom{p-1}{N}.
\end{equation}
Since $2N=p-1$, equations \eqref{eq:K-binomial} and \eqref{eq:Morley}
imply
\[
c_N^2
=
\frac{1}{4^{p-1}}\binom{p-1}{N}^2
\equiv
4^{p-1}
\equiv
K
\pmod{p^3}.
\]
Thus
\begin{equation}\label{eq:K-cN}
	K\equiv c_N^2\pmod{p^3}.
\end{equation}

Combining \eqref{eq:exact-factorization},
Lemma~\ref{lem:cubic-collapse}, and \eqref{eq:K-cN}, we obtain
\begin{equation}\label{eq:before-common-shift}
	H_\rho(-p/2)
	\equiv
	R_\rho c_N^2 U_\rho
	\pmod{p^3}.
\end{equation}

\section{The Normalized $p$-Adic Common Shift}

We now connect the shifted sum $U_\rho$ back to the original rank-four
truncation $S_{p,\rho}$.  The main perturbation calculation used in
this section is due to Long--Tu--Yui--Zudilin
\cite[Lemma~5 and its proof]{MR4330088}.  We apply their argument to
the two rank-four data occurring here and package it in a normalized
$p$-adic family.  The only additional step is to extend their
nonnegative integral perturbation parameter to $s\in\Zp$ by
$p$-adic continuity, allowing the specialization $s=-\frac12$.

Let
\[
\alpha_\rho
:=
\left\{\frac12,\frac12,\rho,1-\rho\right\}
=
\{r_1,r_2,r_3,r_4\}.
\]
For $\rho=\frac13$ and $\rho=\frac16$, these are among the rigid
rank-four hypergeometric data treated in \cite{MR4330088}.

Let $\Gamp$ denote Morita's $p$-adic gamma function
\cite{MR424762}.  For $s\in\Zp$, define
\begin{equation}\label{eq:G-rho-s}
	\mathcal G_\rho(s)
	:=
	\frac{\prod_{j=1}^4\Gamp(r_j+sp)}
	{\Gamp(1+sp)^4}
	\sum_{k=0}^{p-1}
	\frac{\prod_{j=1}^4(r_j+sp)_k}
	{(1+sp)_k^4}.
\end{equation}

\begin{proposition}[Normalized common-shift invariance]
	\label{prop:common-shift}
	For $\rho\in\{\frac13,\frac16\}$ and every $s\in\Zp$,
	\[
	\mathcal G_\rho(s)\equiv\mathcal G_\rho(0)\pmod{p^3}.
	\]
\end{proposition}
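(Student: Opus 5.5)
The plan is to follow the perturbation argument of Long--Tu--Yui--Zudilin \cite[Lemma~5]{MR4330088}. First I establish the congruence for nonnegative integers $s$, and then I extend it to all of $\Zp$ by continuity.

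\textbf{Reduction to polynomial structure.} For each $k\le p-1$, write $\prod_j(r_j+sp)_k/(1+sp)_k^4$ together with the $\Gamp$-prefactor as a single $p$-adic analytic function of $s$. Using the functional equation $\Gamp(x+1)=-x\Gamp(x)$ for $p\nmid x$, I convert the ratio of Pochhammer symbols times $\Gamp(r_j+sp)$ into $\Gamp(r_j+k+sp)$. This holds up to the explicit factors coming from the unique index where $r_j+i+sp$ is divisible by $p$, and that factor is linear in $s$, of the form $p(s+\text{const})$. Thus each summand of $\mathcal G_\rho(s)$ becomes
\[
\pm\prod_j\Gamp(r_j+k+sp)\,\Gamp(1+k+sp)^{-4}\cdot p^{e_k}\prod(\text{linear factors in }s).
\]
Here $e_k\ge0$ is the net count of $p$-crossings, which is nonnegative because the data are defined over $\mathbb Q$ and balanced.

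Next I expand $\log\Gamp(a+sp)=\log\Gamp(a)+G_1(a)sp+G_2(a)s^2p^2/2+O(p^3)$, with $G_i\in\Zp$. This shows that $\mathcal G_\rho(s)$ is congruent modulo $p^3$ to a polynomial in $s$ of degree at most $2$ with $\Zp$-coefficients (after accounting for the factor $p^{e_k}$, which kills the higher terms). In other words,
\[
\mathcal G_\rho(s)\equiv A_0+A_1 ps+A_2p^2s^2 \pmod{p^3},
\]
where the combined contributions are absorbed into the $A_i$.

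\textbf{Vanishing of the linear and quadratic coefficients.} This step follows LTYZ. The key inputs are the symmetry $r_j\mapsto 1-r_j$ of the multiset $\alpha_\rho$ and the reflection formula $\Gamp(x)\Gamp(1-x)=\pm1$. Together these give $\mathcal G_\rho(s)\equiv\mathcal G_\rho(-s-c)$ for a suitable shift $c$. In LTYZ's setup the terms $k$ and $k'$ paired under the reversal $k\mapsto$ (the $p$-adic mirror index) are exchanged. Combined with the fact that $\mathcal G_\rho(s)$ for integer $s\ge0$ equals a longer truncation of the same series, this forces $A_1\equiv A_2\equiv0$ modulo the appropriate power. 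Concretely, LTYZ show that for $s=0,1,2$ (integers) the normalized sums agree modulo $p^3$. A degree-$\le 2$ polynomial in $s$ modulo $p^3$ that is constant on three integer points $s=0,1,2$ has $A_1p\equiv A_2p^2\equiv0$, since the Vandermonde determinant at $0,1,2$ is a unit when $p\ge7$. It is therefore constant for all $s\in\Zp$.

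\textbf{Extension to $\Zp$.} Each ingredient ($\Gamp$, and the Pochhammers with $k<p$ as rational functions whose denominators are controlled) is continuous in $s$. Moreover, the polynomial approximation in the first step is uniform in $s\in\Zp$. Hence the congruence passes to all $s\in\Zp$, in particular to $s=-\frac12$. The main obstacle I expect is the first step done uniformly: the index where $r_j+i+sp\equiv0$ and the resulting factor $p(s+\cdot)$ must be tracked so that no negative valuation appears. The case $\rho=\frac13$ needs particular care, because a denominator crossing can precede a numerator crossing, in the same way as the exceptional block $I_p$. I would handle this by grouping terms exactly as in LTYZ's proof, checking that their valuation bookkeeping does not use integrality of $s$.
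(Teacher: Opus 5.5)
Your proposal follows the paper's argument. Both take $\mathcal G_\rho(b)\equiv\mathcal G_\rho(0)\pmod{p^3}$ at nonnegative integers $b$ from the Long--Tu--Yui--Zudilin perturbation lemma, which is an unchecked black box in your write-up just as in the paper, and then extend to all $s\in\Zp$. The only real difference is the extension step. The paper uses density of $\mathbb Z_{\geq0}$ in $\Zp$ together with continuity. You instead use the correctly justified fact that, modulo $p^3$, $\mathcal G_\rho(s)$ is a polynomial of degree at most $2$ in $s$ whose $s^n$-coefficient is divisible by $p^n$ (the content of the paper's expansion via $A_k\Lambda_{\rho,k}(s)$), so the values at $s=0,1,2$ suffice.

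The symmetry digression $s\mapsto -s-c$ is not needed, and on its own it would kill only one of the two coefficients. Your worry about denominator crossings for $\rho=\frac13$ does not arise here either. The lower parameters of $\mathcal G_\rho$ are $(1+sp)_k$, which are $p$-adic units for $k\le p-1$; the exceptional block $I_p$ belongs to the ${}_7F_6$ series, not to this rank-four family.
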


\begin{proof}
	The main perturbation calculation is precisely the one carried out by
	Long--Tu--Yui--Zudilin \cite[Lemma~5 and its proof]{MR4330088}.
	We record only the specialization needed here.
	
With $\alpha_\rho=\{r_1,r_2,r_3,r_4\}$ as above, set
	\[
	A_k:=\frac{\prod_{j=1}^4(r_j)_k}{(k!)^4},
	\qquad
	\gamma_\rho:=\prod_{j=1}^4\Gamp(r_j).
	\]
	
	For each $r_j$, let $[-r_j]_0\in\{0,1,\ldots,p-1\}$ denote the
	least nonnegative residue of $-r_j$ modulo $p$.  Then
	$r_j+[-r_j]_0$ is divisible by $p$ in $\Zp$, so we may write
	\[
	r_j+[-r_j]_0=pr_j',
	\qquad r_j'\in\Zp.
	\]
	
	Define
	\[
	\Lambda_{\rho,k}(s)
	:=
	\prod_{j=1}^4
	\left(1+\frac{s}{r_j'}\right)^{\nu(k,[-r_j]_0)},
	\]
	where $\nu(k,a)=0$ for $k\leq a$ and $\nu(k,a)=1$ for $k>a$.
	
	Specializing the $p$-adic perturbation expansion of
	Long--Tu--Yui--Zudilin to the datum $\alpha_\rho$ gives
	\begin{equation}\label{eq:LTYZ-specialized}
		\mathcal G_\rho(s)-\mathcal G_\rho(0)
		\equiv
		\gamma_\rho
		\sum_{k=0}^{p-1}
		A_k
		\left[
		\Lambda_{\rho,k}(s)
		\bigl(1+J_1(k)sp+J_2(k)(sp)^2\bigr)-1
		\right]
		\pmod{p^3},
	\end{equation}
	where $J_1(k)$ and $J_2(k)$ are the coefficients arising from their
	local $p$-adic gamma expansion.
	
	For $s=b\in\mathbb Z_{\geq0}$, the sum on the right-hand side of
	\eqref{eq:LTYZ-specialized} is exactly the perturbation expression
	treated in \cite[Lemma~5]{MR4330088}; hence
	\[
	\mathcal G_\rho(b)\equiv\mathcal G_\rho(0)\pmod{p^3}.
	\]
	
	It remains only to extend the parameter from
	$b\in\mathbb Z_{\geq0}$ to $s\in\Zp$.
	The right-hand side of \eqref{eq:LTYZ-specialized} is a finite
	continuous $p$-adic function of $s$.  Since the nonnegative integers
	are dense in $\Zp$ and $p^3\Zp$ is closed, the same congruence holds
	for every $s\in\Zp$.
\end{proof}

\begin{remark}
	Only the specialization $s=-\frac12$ is needed below.  Proposition
	\ref{prop:common-shift} is stated in its natural $p$-adic form because
	the same family may have other useful terminating or resonant
	specializations.
\end{remark}

\section{Endpoint $p$-Adic Gamma Evaluations}

Proposition~\ref{prop:common-shift} identifies the normalized family
$\mathcal G_\rho(s)$ at $s=0$ and $s=-\frac12$ modulo $p^3$.
We now evaluate the gamma prefactors at these two endpoints.  The two
values of $\rho$ require different local calculations, but both lead to
the same rank-four congruence.

We recall the two standard identities for Morita's $p$-adic gamma
function that will be used repeatedly:
\begin{equation}\label{eq:Morita-functional}
	\frac{\Gamp(z+1)}{\Gamp(z)}
	=
	\begin{cases}
		-z,&z\in\Zp^\times,\\
		-1,&z\in p\Zp,
	\end{cases}
\end{equation}
together with the reflection formula
\begin{equation}\label{eq:Morita-reflection}
	\Gamp(z)\Gamp(1-z)=(-1)^{a_0(z)},
\end{equation}
where $a_0(z)\in\{1,\ldots,p\}$ is the unique integer congruent to
$z$ modulo $p$.

We shall also use the consequences
\begin{equation}\label{eq:universal-pgamma}
	\Gamp(-N)=\frac{1}{N!},
	\qquad
	\Gamp\left(\frac12-N\right)
	=
	\frac{\Gamp(\frac12)}{(\frac12)_N},
	\qquad
	\Gamp\left(\frac12\right)^4=1.
\end{equation}
The first two follow by iterating \eqref{eq:Morita-functional} without
crossing a multiple of $p$, and the last follows by squaring
\eqref{eq:Morita-reflection} at $z=\frac12$.

\subsection{The datum $\rho=\frac13$}

At $s=0$,
\[
\mathcal G_{1/3}(0)
=
\Gamp\left(\frac12\right)^2
\Gamp\left(\frac13\right)
\Gamp\left(\frac23\right)
S_{p,1/3}.
\]
Applying \eqref{eq:Morita-reflection} at $\frac12$ and $\frac13$
makes the sign explicit.  If $p=6m+1$, then
\[
a_0\left(\frac12\right)=3m+1,
\qquad
a_0\left(\frac13\right)=4m+1,
\]
so the product sign is $(-1)^m$.  If $p=6m+5$, then
\[
a_0\left(\frac12\right)=3m+3,
\qquad
a_0\left(\frac13\right)=2m+2,
\]
so the product sign is $(-1)^{m+1}$.  These are precisely the values
of $\left(\frac3p\right)$.  Hence
\begin{equation}\label{eq:G0-113}
	\mathcal G_{1/3}(0)
	=
	\left(\frac3p\right)S_{p,1/3}.
\end{equation}

At $s=-\frac12$, the two shifted parameters
$\frac12+sp$ become $-N$, so the sum terminates at $N$ and is exactly
$U_{1/3}$.  The corresponding gamma prefactor is
\[
\frac{
	\Gamp(-N)^2
	\Gamp(-N-\frac16)
	\Gamp(-N+\frac16)}
{\Gamp(\frac12-N)^4}.
\]
Put
\[
z_1:=-N-\frac16,
\qquad
z_2:=-N+\frac16.
\]
Then
\[
(z_1)_{N+1}
=
(-1)^{N+1}\frac16\left(\frac76\right)_N,
\qquad
(z_2)_N
=
(-1)^N\left(\frac56\right)_N.
\]

Exactly one of the shifts $z_1\to\frac56$ and $z_2\to\frac16$
crosses a multiple of $p$, and the crossed factor is $-p/6$.
Indeed, if $p=6m+1$, then
\[
z_1+2m=-\frac p6,
\]
whereas if $p=6m+5$, then
\[
z_2+(2m+1)=-\frac p6.
\]
Iterating \eqref{eq:Morita-functional} through the two strings gives
\[
\Gamp(z_1)\Gamp(z_2)
=
-p\,
\frac{
	\Gamp(\frac16)\Gamp(\frac56)}
{
	(\frac76)_N(\frac56)_N
}.
\]
By \eqref{eq:Morita-reflection},
\[
-\Gamp\left(\frac16\right)
\Gamp\left(\frac56\right)
=
(-1)^N.
\]
Indeed, if $p=6m+1$, then
$a_0(\frac16)=5m+1$ and $N=3m$, while if $p=6m+5$, then
$a_0(\frac16)=m+1$ and $N=3m+2$; in both cases
$a_0(\frac16)+1-N$ is even.

Together with \eqref{eq:universal-pgamma}, this yields
\begin{equation}\label{eq:half-113}
	\mathcal G_{1/3}\left(-\frac12\right)
	=
	(-1)^N R_{1/3}c_N^2U_{1/3}.
\end{equation}
Applying Proposition~\ref{prop:common-shift} and using
\[
(-1)^N=\left(\frac{-1}{p}\right)
\]
gives
\begin{equation}\label{eq:bridge-113}
	R_{1/3}c_N^2U_{1/3}
	\equiv
	\left(\frac{-3}{p}\right)S_{p,1/3}
	\pmod{p^3}.
\end{equation}

\subsection{The datum $\rho=\frac16$}

At $s=0$,
\[
\mathcal G_{1/6}(0)
=
\Gamp\left(\frac12\right)^2
\Gamp\left(\frac16\right)
\Gamp\left(\frac56\right)
S_{p,1/6}.
\]
We again make the reflection sign explicit.  If $p=6m+1$, then
\[
a_0\left(\frac12\right)=3m+1,
\qquad
a_0\left(\frac16\right)=5m+1,
\]
whose sum is even.  If $p=6m+5$, then
\[
a_0\left(\frac12\right)=3m+3,
\qquad
a_0\left(\frac16\right)=m+1,
\]
whose sum is again even.  Hence \eqref{eq:Morita-reflection} gives
\[
\Gamp\left(\frac12\right)^2
\Gamp\left(\frac16\right)
\Gamp\left(\frac56\right)
=
1,
\]
and therefore
\begin{equation}\label{eq:G0-116}
	\mathcal G_{1/6}(0)=S_{p,1/6}.
\end{equation}

At $s=-\frac12$, the shifted sum is $U_{1/6}$, and the gamma prefactor
is
\[
\frac{
	\Gamp(-N)^2
	\Gamp(-N-\frac13)
	\Gamp(-N+\frac13)}
{\Gamp(\frac12-N)^4}.
\]
Put
\[
z_1:=-N-\frac13,
\qquad
z_2:=-N+\frac13.
\]
Then
\begin{equation}\label{eq:z-products-116}
	(z_1)_{N+1}
	=
	(-1)^{N+1}\frac13\left(\frac43\right)_N,
	\qquad
	(z_2)_N
	=
	(-1)^N\left(\frac23\right)_N.
\end{equation}

Exactly one of the two strings crosses a multiple of $p$, and the
crossed factor is $-p/3$.  If $p=6m+1$, then $N=3m$ and
\[
z_1+m=-\frac p3;
\]
if $p=6m+5$, then $N=3m+2$ and
\[
z_2+m=-\frac p3.
\]

Before substituting \eqref{eq:z-products-116}, we record the direct
output of Morita's functional equation.  The two shifts have lengths
$N+1$ and $N$, respectively, so
\begin{equation}\label{eq:Morita-intermediate-116}
	\Gamp\left(\frac23\right)\Gamp\left(\frac13\right)
	=
	(-1)^{(N+1)+N}
	\Gamp(z_1)\Gamp(z_2)
	\frac{(z_1)_{N+1}(z_2)_N}{-p/3}.
\end{equation}
Since $(-1)^{2N+1}=-1$, rearranging gives
\[
\Gamp(z_1)\Gamp(z_2)
=
\frac p3
\frac{
	\Gamp(\frac13)\Gamp(\frac23)}
{(z_1)_{N+1}(z_2)_N}.
\]
Using \eqref{eq:z-products-116},
\[
(z_1)_{N+1}(z_2)_N
=
-\frac13
\left(\frac43\right)_N
\left(\frac23\right)_N,
\]
and therefore
\begin{equation}\label{eq:gamma-z-116}
	\Gamp(z_1)\Gamp(z_2)
	=
	-p\,
	\frac{
		\Gamp(\frac13)\Gamp(\frac23)}
	{
		(\frac43)_N(\frac23)_N
	}.
\end{equation}

By \eqref{eq:Morita-reflection},
\begin{equation}\label{eq:char-minus3}
	-\Gamp\left(\frac13\right)
	\Gamp\left(\frac23\right)
	=
	\left(\frac{-3}{p}\right).
\end{equation}
Indeed, the reflection exponent $a_0(\frac13)$ is $4m+1$ when
$p=6m+1$ and $2m+2$ when $p=6m+5$, giving respectively $+1$ and
$-1$ after the leading minus sign, exactly the values of
$\left(\frac{-3}{p}\right)$.

Combining \eqref{eq:universal-pgamma},
\eqref{eq:gamma-z-116}, and \eqref{eq:char-minus3}, we obtain
\begin{align}
	\mathcal G_{1/6}\left(-\frac12\right)
	&=
	\left(\frac{-3}{p}\right)
	p\,
	\frac{(\frac12)_N^4}
	{(N!)^2(\frac43)_N(\frac23)_N}
	U_{1/6}
	\notag\\
	&=
	\left(\frac{-3}{p}\right)
	R_{1/6}c_N^2U_{1/6}.
	\label{eq:half-116}
\end{align}
Proposition~\ref{prop:common-shift} and \eqref{eq:G0-116} therefore
give
\begin{equation}\label{eq:bridge-116}
	R_{1/6}c_N^2U_{1/6}
	\equiv
	\left(\frac{-3}{p}\right)S_{p,1/6}
	\pmod{p^3}.
\end{equation}

Thus the two endpoint calculations lead to the same rank-four bridge.

\begin{proposition}\label{prop:common-final-bridge}
	For $\rho\in\left\{\frac13,\frac16\right\}$,
	one has
	\[
	R_\rho c_N^2U_\rho
	\equiv
	\left(\frac{-3}{p}\right)S_{p,\rho}
	\pmod{p^3}.
	\]
\end{proposition}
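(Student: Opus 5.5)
The proposition simply collects the two endpoint computations just carried out, so the plan is to assemble them case by case. In both cases I start from Proposition~\ref{prop:common-shift} at $s=-\frac12$:
\[
\mathcal G_\rho\left(-\tfrac12\right)\equiv\mathcal G_\rho(0)\pmod{p^3}.
\]
The specialization $s=-\frac12$ is legitimate because $-\frac12\in\Zp$ for odd $p$. At this point the two parameters $\frac12+sp$ become $-N$. The sum in \eqref{eq:G-rho-s} therefore terminates at $k=N$, and its summands coincide with those of $U_\rho$ in \eqref{eq:U-rho}, since $r_j+sp=r_j-x$ and $1+sp=1-x$. I would check this matching of summands explicitly before using it.

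For $\rho=\frac13$, I would combine \eqref{eq:half-113} with \eqref{eq:G0-113}. This gives
\[
(-1)^N R_{1/3}c_N^2U_{1/3}\equiv\left(\tfrac3p\right)S_{p,1/3}\pmod{p^3}.
\]
Since $N=\frac{p-1}{2}$, we have $(-1)^N=\left(\frac{-1}{p}\right)$, which is a unit equal to its own inverse. Multiplying through by it gives \eqref{eq:bridge-113}, because
\[
\left(\tfrac{-1}{p}\right)\left(\tfrac3p\right)=\left(\tfrac{-3}{p}\right).
\]

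For $\rho=\frac16$, I would combine \eqref{eq:half-116} with \eqref{eq:G0-116}. This gives
\[
\left(\tfrac{-3}{p}\right)R_{1/6}c_N^2U_{1/6}\equiv S_{p,1/6}\pmod{p^3}.
\]
Multiplying by the unit $\left(\frac{-3}{p}\right)$, whose square is $1$, gives \eqref{eq:bridge-116}.

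The only step with real content is the endpoint gamma evaluation, and it is the main obstacle. Its ingredients are:
\begin{itemize}
\item Iterating \eqref{eq:Morita-functional} from $-N$, from $\frac12-N$, and from the two shifted sixth- or third-parameters $z_1,z_2$.
\item Tracking the single crossing of a multiple of $p$ in those strings, which contributes the factor $-p/6$ or $-p/3$.
\item Converting $\Gamp(\rho)\Gamp(1-\rho)$ and $\Gamp(\frac12)^2$ into Legendre symbols via \eqref{eq:Morita-reflection}.
\end{itemize}
I would recheck the signs in each residue class $p\equiv1,5\pmod 6$ separately. I would also confirm that the resulting rational factor is exactly
\[
p\,\frac{(\frac12)_N^4}{(N!)^2(\frac32-\rho)_N(\frac12+\rho)_N}=R_\rho c_N^2,
\]
using $\Gamp(-N)^2/\Gamp(\frac12-N)^4=(\frac12)_N^4/(N!)^2$ from \eqref{eq:universal-pgamma}. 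With those identities granted from the preceding subsections, the proposition follows by the two short multiplications above.
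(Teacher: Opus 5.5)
Your proposal is correct and is essentially the paper's own argument: you specialize Proposition~\ref{prop:common-shift} at $s=-\frac12$, identify the terminated sum with $U_\rho$, and combine \eqref{eq:half-113} with \eqref{eq:G0-113} (resp.\ \eqref{eq:half-116} with \eqref{eq:G0-116}), then clear the unit sign $(-1)^N=\left(\frac{-1}{p}\right)$ (resp.\ $\left(\frac{-3}{p}\right)$). One small correction to your list of ingredients: at the $s=-\frac12$ endpoint the reflection formula is applied to $\Gamp(\frac12-\rho)\Gamp(\frac12+\rho)$, that is, to $\Gamp(\frac16)\Gamp(\frac56)$ for $\rho=\frac13$ or $\Gamp(\frac13)\Gamp(\frac23)$ for $\rho=\frac16$, whereas $\Gamp(\rho)\Gamp(1-\rho)$ and $\Gamp(\frac12)^2$ enter only at $s=0$.
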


\section{Completion of the Proof}

Combining Proposition~\ref{prop:initial-bridge},
\eqref{eq:before-common-shift}, and
Proposition~\ref{prop:common-final-bridge}, we obtain
\[
H_{p,\rho}
\equiv
\left(\frac{-3}{p}\right)S_{p,\rho}
\pmod{p^3},
\qquad
\rho\in\left\{\frac13,\frac16\right\}.
\]
This is precisely \eqref{eq:target-bridge}.  Theorem~\ref{thm:main}
therefore follows from the rank-four supercongruences and quadratic
twist relations recorded in Section~\ref{sec:main-results}. This proves the theorem for $p\geq7$; together with the direct verifications for $p=3$ and $p=5$, the proof is complete.

\section{Concluding Remarks}

From the Mishev reflection onward, the main transformation mechanism is
independent of the choice $\rho\in\{\frac13,\frac16\}$.
The case-dependent arithmetic is concentrated in the initial valuation
analysis, the finite factor $R_\rho$, and the endpoint
$p$-adic gamma evaluation.  By contrast, the Sears prefactor, the
universal factor $K$, the cubic denominator collapse, and the use of
Morley's congruence are common to both cases.  This separation between
case-dependent arithmetic and a common symmetry-and-transformation
mechanism is one of the main structural features of the proof.

The normalized common-shift invariance is also stronger than what is
needed here: only the specialization $s=-\frac12$ is used in the two
proofs.  It is natural to ask whether other rational points in the same
$p$-adic family give useful terminating or resonant specializations.
Together with the $W(D_5)$ symmetry and the classical Whipple--Sears
transformations, such specializations may provide a systematic way to
search for further hypergeometric supercongruences.
\section*{Acknowledgements}

\paragraph{Use of artificial intelligence.}
The research reported in this article, including computational
exploration, proof strategy, the checking of calculations, and the
preparation and proofreading of the manuscript, was carried out by the
author with the assistance of the AI system ChatGPT (OpenAI).
All mathematical statements, computations, and proofs have been
checked and verified by the author, who takes full responsibility for
the correctness and integrity of the article.

\nocite{*}

\bibliographystyle{amsplain}
\bibliography{references}

\end{document}